\newcommand{\breakingcomma}{  \begingroup\lccode`~=`,
  \lowercase{\endgroup\expandafter\def\expandafter~\expandafter{~\penalty0 }}}
\newtheorem{theorem}{Theorem}[section]
\newtheorem{lemma}[theorem]{Lemma}
\newtheorem{remark}[theorem]{Remark}
\newtheorem{proposition}[theorem]{Proposition}
\newtheorem{corollary}[theorem]{Corollary}
\newtheorem{definition}[theorem]{Definition}
\newtheorem{example}[theorem]{Example}
\newcommand\be{\begin{equation}}
\newcommand\ee{\end{equation}}
\newcommand\bn{\begin{eqnarray}}
\newcommand\en{\end{eqnarray}}
\newcommand\bns{\begin{eqnarray*}}
\newcommand\ens{\end{eqnarray*}}
\newcommand\bd{\begin{definition}}
\newcommand\ed{\end{definition}}
\newcommand\br{\begin{remark}}
\newcommand\er{\end{remark}}
\newcommand\bt{\begin{theorem}}
\newcommand\et{\end{theorem}}
\newcommand\bp{\begin{proposition}}
\newcommand\ep{\end{proposition}}
\newcommand\bc{\begin{corollary}}
\newcommand\ec{\end{corollary}}
\newcommand\bl{\begin{lemma}}
\newcommand\el{\end{lemma}}
\newcommand{\op}{\mbox{one-$p$th}}
\begin{document}

\title{One-$p$th Riordan Arrays in the Construction of Identities}
\author{Tian-Xiao He\\
{\small Department of Mathematics}\\
{\small Illinois Wesleyan University, Bloomington, Illinois 61702, USA}\\
}
\date{In Memory of Professor Leetsch C. Hsu}
\maketitle

\begin{abstract}
\noindent 
For an integer $p\geq 2$ we construct vertical and horizontal {\op} Riordan arrays from a Riordan array. When $p=2$ {\op} Riordan arrays reduced to well known half Riordan arrays. The generating functions of the $A$-sequences of vertical and horizontal {\op} Riordan arrays are found. The vertical and horizontal {\op} Riordan arrays provide an approach to construct many identities. They can also be used to verify some well known identities readily. 

\vskip .2in \noindent AMS Subject Classification: 15B36, 05A15, 05A05, 
15A06, 05A19, 11B83.

\vskip .2in \noindent \textbf{Key Words and Phrases:} Riordan array, one-$p$th Riordan arrays, $A$-sequence, identities.
\end{abstract}


\setcounter{page}{1} \pagestyle{myheadings} 
\markboth{T. X. He}
{Half Riordan Arrays}


\section{Introduction}

The Riordan group is a group of infinite lower triangular matrices defined
by two generating functions. Let $g\left(
z\right)=g_{0}+g_{1}z+g_{2}z^{2}+\cdots $ and $f\left(
z\right)=f_{1}z+f_{2}z^{2}+\cdots $ with $g_{0}$ and $f_{1}$ nonzero.
Without much loss of generality we will also set $g_{0}=1$. Given $g\left(
z\right) $ and $f\left( z\right),$ the matrix they define is $D=\left(
d_{n,k}\right)_{n,k\geq 0}$, where $d_{n,k}=\left[ z^{n}\right] g\left(
z\right) f\left( z\right) ^{k}$. For the sake of readability we often
shorten $g\left(z\right) $ and $f\left( z\right) $ to $g$ and $f$ and we
will denote $D$ as $\left( g,f\right)$. Essentially the columns of the
matrix can be thought of as a geometric sequence with $g$ as the leading term
and $f$ as the multiplier term. Two examples are the identity matrix

\begin{equation*}
\left( 1,z\right) =\left[ 
\begin{array}{ccccc}
1 & 0 & 0 & 0 &  \\ 
0 & 1 & 0 & 0 &  \\ 
0 & 0 & 1 & 0 & \cdots \\ 
0 & 0 & 0 & 1 &  \\ 
&  & \cdots &  & \ddots 
\end{array}
\right]
\end{equation*}
and the Pascal matrix

\begin{equation*}
\left( \frac{1}{1-z},\frac{z}{1-z} \right) =\left[ 
\begin{array}{ccccc}
1 & 0 & 0 & 0 & \cdots \\ 
1 & 1 & 0 & 0 &  \\ 
1 & 2 & 1 & 0 &  \\ 
1 & 3 & 3 & 1 &  \\ 
&  & \ldots &  & \ddots%
\end{array}
\right].
\end{equation*}

Here is a list of six important subgroups of the Riordan group (see \cite{SGWW}).

\begin{itemize}
\item the {\it Appell subgroup} $\{ (g(z),\,z)\}$.
\item the {\it Lagrange (associated) subgroup} $\{(1,\,f(z))\}$.
\item the {\it $k$-Bell subgroup} $\{(g(z),\, z(g(z))^k)\}$, where $k$ is a fixed positive integer. 
\item the {\it hitting-time subgroup} $\{(zf'(z)/f(z),\, f(z))\}$.
\item the {\it derivative subgroup} $\{ (f'(z), \, f(z))\}$.
\item the {\it checkerboard subgroup} $\{ (g(z),\, f(z))\},$ where $g$ is an even function and $f$ is an odd function. 
\end{itemize}

The $1$-Bell subgroup is referred to as the Bell subgroup for short, and the Appell subgroup can be considered as the $0$-Bell subgroup if we allow $k=0$ to be included in the definition of the $k$-Bell subgroup.

\bigskip The Riordan group acts on the set of column vectors by matrix
multiplication. In terms of generating functions we let $d\left(
z\right)=d_{0}+d_{1}z+d_{2}z^{2}+\cdots $ and $\ h\left(
z\right)=h_{0}+h_{1}z+h_{2}z^{2}+\cdots$. If $\left[ d_{0},d_{1},d_{2}, 
\cdots \right] ^{T}$ and $\left[ h_{0},h_{1},h_{2},\cdots \right] ^{T}$ are
the corresponding column vectors we observe that

\begin{equation*}
\left( g,f\right) \left[d_{0},d_{1},d_{2},\cdots \right] ^{T}=\left[
h_{0},h_{1},h_{2},\cdots \right] ^{T}
\end{equation*}
translates to

\begin{equation*}
d_{0}g\left( z\right) +d_{1}g\left( z\right) f\left( z\right) +d_{2}g\left(
z\right) f\left( z\right) ^{2}+\cdots =g\left( z\right) \cdot d\left(
f(z\right) )=h\left( z\right).
\end{equation*}
This simple observation is called the Fundamental theorem of Riordan Arrays
and is abbreviated as FTRA.

The first application of the fundamental theorem is to set $d\left( z\right)
=\hat g\left( z\right) \hat f\left( z\right) ^{k}$ so that

\begin{equation*}
h\left( z\right) =g\left( z\right) \cdot \hat g\left( f\left( z\right) \right)
\hat f\left( f\left( z\right) \right) ^{k}.
\end{equation*}
As $k$ ranges over $0,1,2,\cdots $ the multiplication rule for Riordan
arrays emerges.

Riordan arrays play an important unifying role in enumerative combinatorics, especially in proving combinatorial identities, for instance, some results presented in \cite{Spr95}, \cite{He18}, \cite{Hsu15}, etc. This paper will define a new type of Riordan arrays and study their applications in the construction of identities. 

We define the \textit{Riordan group} as the set of all pairs $(g,f)$ as
above together with the multiplication operation

\begin{equation*}
(g,f) (\hat g,\hat f)=(g\cdot (\hat g\circ f),\hat f\circ f).
\end{equation*}
The identity element for this group is $(1,z)$. If we denote the 
compositional inverse of $f$ as $\bar{f}$, then

\begin{equation*}
(g,f)^{-1}=\left( \frac{1}{g\circ \bar{f}}\,\text{ }\bar{f}\right).
\end{equation*}

As an example we return to the Pascal matrix where $f=\frac{z}{1-z}$. 
The inverse is $\overline{f}=\frac{z}{1+z},$ $g\left( \overline{f}\right) =
\frac{1}{1-\left( \frac{z}{1+z}\right) }=1+z$ and the inverse matrix starts 
\begin{equation*}
\left( \frac{1}{1+z},\frac{z}{1+z}\right) =\left[ 
\begin{array}{ccccc}
1 & 0 & 0 & 0 & 0 \\ 
-1 & 1 & 0 & 0 & 0 \\ 
1 & -2 & 1 & 0 & 0 \\ 
-1 & 3 & -3 & 1 & 0 \\ 
1 & -4 & 6 & -4 & 1
\end{array}
\right].
\end{equation*}
Both Pascal matrix and $(1/(1+z), z/(1+z))$ are pseudo-involution Riordan array 
due to their multiplications with $(1, -z)$ are involutions. 

For more information about the Riordan group see Shapiro, Getu, Woan and
Woodson \cite{SGWW}, Shapiro \cite{Shapiro}, Barry \cite{Barry}, and Zeleke \cite{Zeleke}. 
Shapiro and the author presented palindromes of pseudo-involutions in a recent paper \cite{HS20}. For general information about such items as Catalan numbers, Motzkin numbers,
generating functions and the like there are many excellent sources including
Stanley \cite{Stanley, Sta} and Aigner \cite{Aig}. A short survey and an extension of 
Catalan numbers and Catalan matrices can be seen in \cite{He13, HS17}. Fundamental papers by Sprugnoli \cite{Spr94, Spr95} investigated the Riordan arrays and showed that they constitute a practical device for solving {\it combinatorial sums}\index{combinatorial sums} by means of the generating functions and the {\it Lagrange inversion formula}. 

For a function $f$ as above, there is a sequence $a_{0},a_{1},a_{2},\cdots $
called the $A$ sequence such that

\begin{equation*}
f=z\left( a_{0}+a_{1}f+a_{2}f^{2}+a_{3}f^{3}+\cdots \right).
\end{equation*}
The corresponding generating function is $A\left( z\right)
=\sum\nolimits_{n\geq 0}a_{n}z^{n}$ so we have, in terms of generating
functions, $f=zA(f)$. See Merlini, Rogers, Sprugnoli, and Verri, \cite{MRSV}
for a proof and Sprugnoli and the author \cite{HS} and the author \cite{He20} for further results. The $A$
sequence enables us to inductively compute the next row of a Riordan matrix
since

\begin{equation*}
d_{n+1,k}=a_{0}d_{n,k-1}+a_{1}d_{n,k}+a_{2}d_{n,k+1}+\cdots.
\end{equation*}
The missing item is for the left most, i.e., zeroth column and there is a
second sequence, the $Z$ sequence such that

\begin{equation*}
d_{n+1,0}=z_{0}d_{n,0}+z_{1}d_{n,1}+z_{2}d_{n,2}+\cdots.
\end{equation*}
The generating function $Z=\sum_{n\geq 0}z_{n}z^{n}$ is defined by the
equation $g(z)=1/(1-zZ(f(z)))$.

By substituting $z=\overline{f}$ into the equation $f = z(A(f))$, we may have  $z=\overline{f}(z)A(z) =\overline{f}A$. Similarly, applying $\overline{f}$ gives us a useful alternate form of $g(z)=1/(1-z Z(f(z)))$ as $Z=(g(\bar f)-1)/(\bar f g(\bar f))$. We call $A(z)$ and $Z(z)$ the $A$ and $Z$ functions of the Riordan array $(g,f)$. 

We now consider an extension of Riordan arrays called half Riordan arrays, which will be extended to one-$p$th Riordan arrays in the next section. 

The entries of a Riordan array have a multitude of interesting combinatorial explanations. The central entries play a significant role. For instance, the central entries of the Pascal matrix $(1/(1-z), z/(1-z))$ are the central binomial coefficients $\binom{2n}{n}$ (see the sequence A000984 in OEIS \cite{OEIS}) that can be explained as the number of ordered trees with a distinguished point. In addition, its exponential generating function is a modified Bessel function of the first kind. Similarly, the central entries of the Delannoy matrix $(1/(1-z) , z(1+z)/(1-z))$, called the Pascal-like Riordan array, are the central Delannoy numbers $\sum^n_{k=0}\binom{n}{k}^22^k$ (see the sequence A001850 in OEIS \cite{OEIS}). The central Delannoy numbers can be explained as the number of paths from $(0,0)$ to $(n,n)$ in an $n\times n$ grid using only steps north, northeast and east (i.e., steps $(1,0)$, $(1,1)$, and $(0,1)$). In addition, the $n$th central Delannoy numbers is the $n$th Legendre polynomial's value at $3$. It is interesting, therefore, to be able to give generating functions of such central terms in a systematic way. In recent papers \cite{Barry13, Barry19, Bar, He20-2, 
YXH, YZYH} (cf. also the references of \cite{He20-2}), it has been shown how to find generating functions of the central entries of some Riordan arrays.

Yang, Zheng, Yuan, and the author \cite{YZYH} give the following definition of half Riordan arrays (HRAs), which are called vertical half Riordan arrays in Barry \cite{Barry19} and in  \cite{He20}. 

\begin{definition} \label{def:3.1}
Let $(g,f)=(d_{n,k})_{n,k\geq 0}$ be a Riordan array. Its related half Riordan array $(v_{n,k})_{n,k\geq 0}$, 
called the vertical half Riordan array (VHRA), is defined by 

\be\label{3.1}
v_{n,k}=d_{2n-k,n}.
\ee
\end{definition}

Denote $\phi=\overline{t^2/f}$. A direct approach is used in \cite{He20-2} to show that $(v_{n,k})_{n,k\geq 0}=(t\phi'(t)g(\phi)/\phi, \phi)$ based on the Lagrange inversion formula.

In \cite{He20-2}, a decomposition of $(v_{n,k})_{n,k\geq 0}$  is presented as 

\be\label{3.4}
\left( \frac{t\phi'(t)g(\phi)}{\phi}, \phi\right)=\left( \frac{t\phi'}{\phi},\phi\right)(g,t).
\ee
Decomposition \eqref{3.4} suggests a more general type of half of Riordan array $(g,f)$ defined by 

\be\label{3.4-2}
\left( \frac{t\phi'(t)g(\phi)}{\phi}, f(\phi)\right)=\left( \frac{t\phi'}{\phi},\phi\right)(g,f),
\ee
which is called the horizontal half of Riordan array (HHRA) in \cite{Barry19, He20}, in order to distinguish it from VHRA. A similar approach can be used to show that the entries of the HHRA $(h_{n,k})_{n,k\geq 0}$ of $(g,f)=(d_{n,k})_{n,k\geq 0}$ are 

\be\label{3.4-3}
h_{n,k}=d_{2n, n+k},
\ee
while a constructive approach is presented in \cite{Barry19} and an $(m,r)$ extension can be seen in \cite{YXH}. 

In the next section the VHRA and the HHRA of a given Riordan array will be extended to the {\op} vertical and the {\op} horizontal Riordan arrays of the Riordan array. Then the {\op} vertical transformation operators and the {\op} horizontal Riordan array transformation operators will be defined. We will present the relationship between the two types of {\op} Riordan arrays by using their matrix factorization and the Lagrange inversion formula.  In Section $3$, the sequence characterizations of the two types of {\op} Riordan arrays and several illustrating examples are given. In Section $4$, we study transformations among Riordan arrays by using the {\op} Riordan array operators. The conditions for transforming Riordan arrays to pseudo-involution Riordan arrays by using the {\op} Riordan arrays are given.  The condition for preserving the elements of a certain subgroup of the Riordan group under the {\op}f Riordan array transformation is shown. Other properties of the halves of Riordan arrays and their entries such as related recurrence relations, double variable generating functions, combinatorial explanations are also studied in the section. In the last section, we will show the construction of identities and summation formulae by using {\op} Riordan arrays. 

\section{One-$p$th Riordan arrays}

The vertical and horizontal one-$p$th Riordan arrays of a Riordan array $(g,f)$ will be defined and constructed in the following two theorems.

\begin{theorem}\label{thm:4.3.6-2}
Given a Riordan array $(d_{n,k})_{n,k\geq 0}=(g,f)$, for any integers $p\geq 1$ and $r\geq 0$,  $(\widehat {d}_{n,k}=d_{pn+r-k,(p-1)n+r})_{n,k\geq 0}$ defines a new Riordan array, called the one-$p$th or $(p,r)$ vertical Riordan array of $(g,f)$, which can be written as 

\be\label{p-1}
\left( \frac{t\phi'(t)g(\phi)f(\phi)^r}{\phi^{r+1}}, \phi\right),\quad \mbox{where} \quad \phi(t)=\overline{\frac{t^{p}}{f(t)^{p-1}}},
\ee
and $\bar h(t)$ is the compositional inverse of $h(t)$ $(h(0)=0$ and $h'(0)\not=0)$. Particularly, if $p=1$ and $r=0$, then $(\widehat{d}_{n,k}=d_{n-k,0})_{n,k\geq 0}$ is the Toeplitz matrix (or diagonal-constant matrix) of the $0$th column of $(d_{n,k})_{n,k\geq 0}$, and if $p=2$ and $r=0$, then $(\widehat{d}_{n,k}=d_{2n-k,n})_{n,k\geq 0}$ is the VHRA of the Riordan array $(d_{n,k})_{n,k\geq 0}$.

Moreover, the generating function of the A-sequence of the new array is $(A(f))^{p-1}=(f/t)^{p-1}$, where $A(t)$ is the generating function of the A-sequence of the given Riordan array.
\end{theorem}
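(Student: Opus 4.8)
The plan is to identify the $k$th column generating function of the proposed array directly from the coefficient formula for $d_{n,k}$, using the residue form of the Lagrange inversion formula (LIF), exactly as in the derivation of the VHRA recalled after Definition~\ref{def:3.1}. Writing $\widehat d_{n,k}=d_{pn+r-k,(p-1)n+r}=[z^{pn+r-k}]\,g(z)f(z)^{(p-1)n+r}$, the first step is to isolate the dependence on $n$. With $h(z)=z^{p}/f(z)^{p-1}$ (so that $\phi=\bar h$), coefficient extraction becomes a formal residue and, since $f(z)^{p-1}/z^{p}=h(z)^{-1}$,
\be
\widehat d_{n,k}=\mathrm{Res}_{z}\,g(z)f(z)^{r}z^{k-r-1}\Big(\frac{f(z)^{p-1}}{z^{p}}\Big)^{n}=\mathrm{Res}_{z}\,\Theta(z)\,h(z)^{-n},\qquad \Theta(z):=g(z)f(z)^{r}z^{k-r-1}.
\ee
The point of this rearrangement is that all of the $n$-dependence is now carried by the single factor $h(z)^{-n}$.

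Next I would invoke LIF in the residue form $[t^{n}]\Psi(\phi(t))=\mathrm{Res}_{z}\,\Psi(z)\,h'(z)/h(z)^{n+1}$, which holds for $\phi=\bar h$ and any formal power series $\Psi$ (it follows by the change of variables $t=h(z)$ in $\mathrm{Res}_{t}\,\Psi(\phi(t))/t^{n+1}$). Matching this against the residue above, I choose $\Psi(z)=\Theta(z)h(z)/h'(z)$, so that $\Psi(z)h'(z)/h(z)^{n+1}=\Theta(z)h(z)^{-n}$ and the two residues agree for every $n\ge0$. Summing on $n$ then gives the closed form $\sum_{n\ge0}\widehat d_{n,k}\,t^{n}=\Psi(\phi(t))$. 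A check on lowest-order terms ($f(z)^{r}\sim f_{1}^{r}z^{r}$ and $h(z)/h'(z)\sim z$) shows $\Psi$ starts at order $z^{k}$, hence is a genuine power series and the column generating function begins at $t^{k}$, confirming lower-triangularity.

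It remains to simplify $\Psi(\phi)$. Substituting $z=\phi(t)$ and using the inverse-function relations $h(\phi)=t$ and $h'(\phi)\phi'=1$ collapses $\Psi(\phi)=\Theta(\phi)h(\phi)/h'(\phi)$ to $t\phi'\,g(\phi)f(\phi)^{r}\phi^{k-r-1}=\big(t\phi'(t)g(\phi)f(\phi)^{r}/\phi^{r+1}\big)\phi^{k}$. Reading off the leading coefficient and the multiplier identifies the array as $(t\phi'(t)g(\phi)f(\phi)^{r}/\phi^{r+1},\,\phi)$, which is \eqref{p-1}; the stated cases $(p,r)=(1,0)$ and $(2,0)$ then follow by direct substitution. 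For the $A$-sequence claim I would use the identity $A(z)=z/\bar f(z)$ relating the $A$-function of $(g,f)$ to the inverse of its multiplier, recorded in the Introduction. For the new array the multiplier is $\phi$ with $\bar\phi=h=z^{p}/f(z)^{p-1}$, so its $A$-function is $\widehat A(z)=z/\bar\phi(z)=z\,f(z)^{p-1}/z^{p}=(f(z)/z)^{p-1}$; since $f=zA(f)$ gives $A(f)=f/z$, this equals $(A(f))^{p-1}=(f/t)^{p-1}$, as asserted.

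The main obstacle I anticipate is bookkeeping rather than conceptual: carrying the exponents $pn+r-k$ and $(p-1)n+r$ through the residue so that the $n$-dependence factors cleanly as $h(z)^{-n}$, and then pairing $\Psi$ with $\Theta$ correctly (the choice $\Psi=\Theta h/h'$ is precisely what makes the two residues agree term by term). One should also check at the outset that $h(z)=z^{p}/f(z)^{p-1}=f_{1}^{-(p-1)}z+\cdots$ has nonzero linear coefficient, so $\phi=\bar h$ exists with $\phi(0)=0$, $\phi'(0)\ne0$, and that $\widehat d_{0,0}=[z^{r}]g f^{r}=f_{1}^{r}\ne0$, ensuring $(t\phi'g(\phi)f(\phi)^{r}/\phi^{r+1},\phi)$ is a bona fide Riordan array.
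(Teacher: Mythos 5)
Your argument is correct, and it rests on the same engine as the paper's proof: Lagrange inversion applied to the same substitution $\phi=\overline{t^{p}/f^{p-1}}$, with the same reduction $A(z)=z/\bar f(z)$ for the $A$-sequence claim. The difference is one of direction and of which form of the LIF you invoke. The paper starts from the candidate array \eqref{p-1}, first computes $\phi'(t)$ explicitly by differentiating $\phi=tu(\phi)$ with $u=(f/t)^{p-1}$, and then applies the kernel form \eqref{3.2} of the LIF, followed by a fairly heavy cancellation, to verify that the $(n,k)$ entry equals $d_{pn+r-k,(p-1)n+r}$; the closed form \eqref{p-1} must be known in advance. You instead start from the entries, factor the $n$-dependence into $h(z)^{-n}$ with $h=\bar\phi$, and use the residue/change-of-variables form $[t^{n}]\Psi(\phi)=\mathrm{Res}_{z}\,\Psi(z)h'(z)/h(z)^{n+1}$ with $\Psi=\Theta h/h'$; the identities $h(\phi)=t$ and $h'(\phi)\phi'=1$ then produce the factor $t\phi'$ without ever computing $\phi'$ explicitly, so the form of \eqref{p-1} is derived rather than verified. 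Your version is shorter and constructive, and your checks that $\Psi$ is a genuine power series of order $k$ and that $h$ has a nonzero linear coefficient cover the well-definedness points the paper leaves implicit; the paper's version has the mild advantage of using only the single stated identity \eqref{3.2} rather than the change-of-variables formula for residues, which you should cite or prove (it is standard, e.g.\ in Merlini--Sprugnoli--Verri) if you write this up in full.
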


The Lagrange Inverse Formula (LIF) will be used in the proof. Let $F(t)$ be any formal power series, and let $\phi(t)$ and $u(t)=f(t)/t$ satisfy $\phi=tu(\phi)$. Then the following LIF holds (see, for example, $K6'$ in Merlini, Sprugnoli, and Verri \cite{MSV}).

\be\label{3.2}
[t^n]F(\phi(t))=[t^n]F(t)u(t)^{n-1}(u(t)-tu'(t)).
\ee

\begin{proof}
From $\phi(t)=\overline{t^{p}/f(t)^{p-1}}$ we have $\bar \phi(t)=t^p/f(t)^{p-1}$ and consequently 
$t=\phi(t)^p/f(\phi(t))^{p-1}$. Hence, we may write  

\[
\phi=t u(\phi)\quad \mbox{where}\quad u(t)=\left(\frac{f(t)}{t}\right)^{p-1}.
\]
Taking derivative on the both sides of the equation $\phi=t u(\phi)$ and noting the definition of $u(t)$, we obtain 

\begin{align*}
\phi'(t)=&\left(\frac{f(\phi)}{\phi}\right)^{p-1}+t(p-1)\left(\frac{f(\phi)}{\phi}\right)^{p-2}\frac{f'(\phi)\phi'(t)\phi-\phi'(t)f(\phi)}{\phi^2},
\end{align*}
which yields 

\[
\phi'(t)=\left. \left(\frac{f(\phi)}{\phi}\right)^{p-1}\right/\left( 1-t(p-1)\left(\frac{f(\phi)}{\phi}\right)^{p-2}\frac{f'(\phi)\phi-f(\phi)}{\phi^2}\right).
\]
Noting $t=\phi/u(\phi)=\phi^p/f(\phi)^{p-1}$, the last expression devotes  

\begin{align}\label{p-2}
\phi'(t)=&\left. \left(\frac{f(\phi)}{\phi}\right)^{p-1}\right/\left( 1-\frac{p-1}{f(\phi)}(f'(\phi)\phi-f(\phi))\right)\nonumber\\
=&\frac{(f(\phi))^p}{\phi^{p-1}( f(\phi) -(p-1)(\phi f'(\phi)-f(\phi)))}
\end{align}
We now use \eqref{p-2}, $t=\phi^p/f(\phi)^{p-1}$, and the LIF shown in \eqref{3.2} to calculate $\widehat d_{n,k}$ for $n,k\geq 0$ 

\begin{align*}
\widehat {d}_{n,k}=&[t^n] \frac{t\phi'(t)g(\phi)f(\phi)^r}{\phi^{r+1}}\left( \phi\right)^k\\
=&[t^n] \frac{\phi^p}{(f(\phi))^{p-1}}\frac{\phi^k(f(\phi))^{p+r}g(\phi)}{\phi^{p+r}(f(\phi) -(p-1)(\phi f'(\phi)-f(\phi)))}\\
=&[t^n]\frac{(f(\phi))^{r+1}g(\phi)}{\phi^{r-k}(f(\phi) -(p-1)(\phi f'(\phi)-f(\phi)))}\\
=&[t^n]\frac{(f(t))^{r+1}g(t)}{t^{r-k}(f(t) -(p-1)(t f'(t)-f(t)))}u(t)^{n-1}(u(t)-tu'(t)),
\end{align*}
where $u(t)=\left(\frac{f(t)}{t}\right)^{p-1}$ and 

\[
u'(t)=(p-1)\left(\frac{f(t)}{t}\right)^{p-2}\frac{tf'(t)-f(t)}{t^2}.
\]
Substituting the expressions of $u(t)$ and $u'(t)$ into the rightmost expression of $\widehat d_{n,k}$, we have

\begin{align*}
\widehat {d}_{n,k}=&[t^n]\frac{(f(t))^{r+1}g(t)}{t^{r-k}(f(t) -(p-1)(t f'(t)-f(t)))}\frac{(f(t))^{(p-1)(n-1)}}{t^{(p-1)(n-1)}}\\
&\quad \times \left( \frac{(f(t)^{p-1}}{t^{p-1}}-t(p-1)\frac{(f(t))^{p-2}}{t^{p-2}}\frac{tf'(t)-f(t)}{t^2}\right)\\
=&[t^n] \frac{(f(t)^{(p-1)(n-1)+r+1}g(t)}{t^{(p-1)(n-1)+r-k}(f(t)-(p-1)(tf'(t)-f(t)))}\\
&\quad \times \frac{(f(t))^{p-2}}{t^{p-1}}\left(f(t)-(p-1)(tf'(t)-f(t))\right)\\
=&[t^n]g(t)\frac{(f(t))^{(p-1)n+r}}{t^{(p-1)n+r-k}}=[t^{pn+r-k}]g(t)(f(t))^{(p-1)n+r}=d_{pn+r-k,(p-1)n+r}.
\end{align*}

Particularly, if $p=1$ and $r=0$, then $(\widehat {d}_{n,k}=d_{n-k,0})_{n,k}$ is the Toeplitz matrix of the $0$th column of $(g,f)$. If $p=2$ and $r=0$, then $(\widehat {d}_{n,k}=d_{2n-k, n})_{n,k\geq 0}$ is the VHRA of $(g,f)$.

As for the $\widehat{A}_p$, the generating function of the $A$-sequence of $(\widehat {d}_{n,k})_{n,k\geq 0}$, we have $t\widehat {A}_p(\phi)=\phi$, which implies $\widehat{A}_p(t)=t/(t^p/f^{p-1})$, or equivalently,

\[
\widehat{A}_p(\bar f)=\left(\frac{t}{\bar f}\right)^{p-1}=(A(t))^{p-1}.
\]
Hence, $\widehat{A}_p(t)=(A(f))^{p-1}=(f/t)^{p-1}$ because $tA(f)=f$, completing the proof of the theorem.
\end{proof}

\begin{theorem}\label{thm:4.3.6-3}
Given a Riordan array $(d_{n,k})_{n,k\geq 0}=(g,f)$, for any integers $p\geq 1$ and $r\geq 0$,  $(\tilde d_{n,k}=d_{pn+r,(p-1)n+r+k})_{n,k\geq 0}$ defines a new Riordan array, called the one-$p$th or $(p,r)$ horizontal Riordan array of $(g,f)$, which can be written as 

\be\label{p-1-2}
\left( \frac{t\phi'(t)g(\phi)f(\phi)^r}{\phi^{r+1}}, f(\phi)\right),\quad \mbox{where} \quad \phi(t)=\overline{\frac{t^{p}}{f(t)^{p-1}}},
\ee
and $\bar h(t)$ is the compositional inverse of $h(t)$ $(h(0)=0$ and $h'(0)\not=0)$. Particularly, if $p=1$ and $r=0$, the {\op} Riordan array reduces to the given Riordan array, and if $p=2$ and $r=0$, the {\op} Riordan array is the HHRA of the given Riordan array.

Moreover, the generating function of the A-sequence of the new array is $(A(t))^p$ , where $A(t)$ is the generating function of the A-sequence of the given Riordan array.
\end{theorem}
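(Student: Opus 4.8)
The plan is to mirror the proof of Theorem \ref{thm:4.3.6-2} almost verbatim, changing only the second component of the Riordan pair from $\phi$ to $f(\phi)$. The key observation is that the horizontal array \eqref{p-1-2} is obtained from the vertical array \eqref{p-1} simply by replacing $\phi^k$ with $(f(\phi))^k$ in the column-generating expression, exactly as decomposition \eqref{3.4-2} replaces \eqref{3.4}. So I will reuse the two facts already established in the previous proof: the relation $\phi = t u(\phi)$ with $u(t)=(f(t)/t)^{p-1}$, and the closed form \eqref{p-2} for $\phi'(t)$.

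First I would compute $\tilde d_{n,k}=[t^n]\frac{t\phi'(t)g(\phi)f(\phi)^r}{\phi^{r+1}}(f(\phi))^k$. Substituting $t=\phi^p/f(\phi)^{p-1}$ and the formula \eqref{p-2} for $\phi'(t)$ into the prefactor produces, after cancellation, an expression of the form $[t^n]\frac{(f(\phi))^{r+1+k}g(\phi)}{\phi^{r}(f(\phi)-(p-1)(\phi f'(\phi)-f(\phi)))}$. Then I would apply the Lagrange Inversion Formula \eqref{3.2} with the same $u(t)=(f(t)/t)^{p-1}$ to strip off the $\phi$'s, exactly as in the vertical case. The bracketed factor $u(t)-tu'(t)$ again equals $\frac{(f(t))^{p-2}}{t^{p-1}}(f(t)-(p-1)(tf'(t)-f(t)))$, so the awkward denominator $f(t)-(p-1)(tf'(t)-f(t))$ cancels, and the surviving exponents collapse to give $[t^n]g(t)\frac{(f(t))^{(p-1)n+r+k}}{t^{(p-1)n+r}}=[t^{pn+r}]g(t)(f(t))^{(p-1)n+r+k}=d_{pn+r,(p-1)n+r+k}$, which is the claimed entry $\tilde d_{n,k}$. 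The special cases $p=1,r=0$ (the identity, yielding $d_{n,k}$) and $p=2,r=0$ (the HHRA $d_{2n,n+k}$ of \eqref{3.4-3}) then follow by direct substitution.

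The only genuinely new computation is the $A$-sequence. Here the second component of the pair is $\tilde f := f(\phi)$, so the defining relation is $\tilde f = t\,\widehat A_p(\tilde f)$, i.e. $t\,\widehat A_p(f(\phi))=f(\phi)$, giving $\widehat A_p(f(\phi))=f(\phi)/t = f(\phi)\,u(\phi)/\phi = f(\phi)(f(\phi)/\phi)^{p-1}/\phi = (f(\phi)/\phi)^{p}$. Evaluating at $\phi=\bar f(t)$ (so that $f(\phi)=t$) converts this to $\widehat A_p(t)=(t/\bar f(t))^{p}=(A(t))^{p}$, using the alternate form $z=\bar f A$ of the $A$-function relation recorded in the introduction. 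This matches the claimed generating function $(A(t))^p$ and completes the argument.

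I do not expect a serious obstacle, since the structure is identical to the vertical case; the one point demanding care is the exponent bookkeeping when the extra factor $(f(\phi))^k$ is carried through the LIF, and making sure the $A$-function identity is applied in the correct variable so that the power $p$ (rather than $p-1$, as in the vertical array) emerges. The contrast between the vertical answer $(A(t))^{p-1}$ and the horizontal answer $(A(t))^{p}$ is precisely the extra $f(\phi)/\phi=u(\phi)^{1/(p-1)}\cdot\!$ factor introduced by replacing $\phi^k$ with $(f(\phi))^k$, and I would highlight this as the conceptual reason for the difference.
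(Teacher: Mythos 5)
Your proposal is correct and follows essentially the same route as the paper: the same prefactor simplification via \eqref{p-2}, the same application of the Lagrange Inversion Formula with $u(t)=(f(t)/t)^{p-1}$, and the same derivation of the $A$-sequence from $t\,A_p(f(\phi))=f(\phi)$ followed by a change of variable to obtain $(A(t))^p$. No gaps; your closing remark explaining the extra factor $f(\phi)/\phi$ as the source of the exponent $p$ versus $p-1$ is a correct and useful observation.
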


\begin{proof}
We now use \eqref{p-2} above, $t=\phi^p/f(\phi)^{p-1}$, and the LIF shown in \eqref{3.2} to calculate $\tilde d_{n,k}$ for $n,k\geq 0$ 

\begin{align*}
\tilde d_{n,k}=&[t^n] \frac{t\phi'(t)g(\phi)f(\phi)^r}{\phi^{r+1}}\left( f(\phi)\right)^k\\
=&[t^n] \frac{\phi^p}{(f(\phi))^{p-1}}\frac{(f(\phi))^{p+r+k}g(\phi)}{\phi^{p+r}(f(\phi) -(p-1)(\phi f'(\phi)-f(\phi)))}\\
=&[t^n]\frac{(f(\phi))^{r+k+1}g(\phi)}{\phi^{r}(f(\phi) -(p-1)(\phi f'(\phi)-f(\phi)))}\\
=&[t^n]\frac{(f(t))^{r+k+1}g(t)}{t^{r}(f(t) -(p-1)(t f'(t)-f(t)))}u(t)^{n-1}(u(t)-tu'(t)),
\end{align*}
where $u(t)=\left(\frac{f(t)}{t}\right)^{p-1}$ and from the proof of Theorem \ref{thm:4.3.6-2} 

\[
u'(t)=(p-1)\left(\frac{f(t)}{t}\right)^{p-2}\frac{tf'(t)-f(t)}{t^2}.
\]
Substituting the expressions of $u(t)$ and $u'(t)$ into the rightmost expression of $\tilde d_{n,k}$, we have

\begin{align*}
\tilde d_{n,k}=&[t^n]\frac{(f(t))^{r+k+1}g(t)}{t^{r}(f(t) -(p-1)(t f'(t)-f(t)))}\frac{(f(t))^{(p-1)(n-1)}}{t^{(p-1)(n-1)}}\\
&\quad \times \left( \frac{(f(t)^{p-1}}{t^{p-1}}-t(p-1)\frac{(f(t))^{p-2}}{t^{p-2}}\frac{tf'(t)-f(t)}{t^2}\right)\\
=&[t^n] \frac{(f(t)^{(p-1)(n-1)+r+k+1}g(t)}{t^{(p-1)(n-1)+r}(f(t)-(p-1)(tf'(t)-f(t)))}\\
&\quad \times \frac{(f(t))^{p-2}}{t^{p-1}}\left(f(t)-(p-1)(tf'(t)-f(t))\right)\\
=&[t^n]g(t)\frac{(f(t))^{(p-1)n+r+k}}{t^{(p-1)n+r}}=[t^{pn+r}]g(t)(f(t))^{(p-1)n+r+k}=d_{pn+r,(p-1)n+r+k}.
\end{align*}
Particularly, if $p=1$ and $r=0$, then $\tilde d_{n,k}=d_{n,k}$, while $p=2$ and $r=0$ yields $\tilde d_{n,k}=d_{2n, n+k}$, the $(n,k)$ entry of the HHRA of $(g,f)$.

Let $A(t)$ be the generating function of the $A$-sequence of the given Riordan array $(g,f)$. Then $A(f(t))=f(t)/t$. Let $A_p(t)$ be the generating function of the $A$-sequence of the Riordan array 
shown in \eqref{p-1}. Then $A_p(f(\phi))=\frac{f(\phi)}{t}$. Substituting $t=\overline{\phi}(t)$ into the last equation yields

\[
A_p(f)=\frac{f(t)}{\overline{\phi}(t)}=\frac{f(t)}{t^p/(f(t))^{p-1}}=\left( \frac{f(t)}{t}\right)^p=(A(f))^p,
\]
i.e., $A_p(t)=(A(t))^p$ completing the proof.
\end{proof}

\section{Identities related to {\op} Riordan arrays}

We may use Theorems \ref{thm:4.3.6-2} and \ref{thm:4.3.6-3} and the Fa\`a di Bruno formula to establish a class of summation formulae. 

Let $h(t)=\sum^\infty_{n=0}\alpha_nt^n$ be a given formal power series with the case $h(0)=\alpha_0\not= 0$. Assume that $f(a+t)$ has a formal power series expansion in $t$ with $a\in{\mathbb R}$, real numbers, and let $\bar f$ denote the compositional inverse of $f$ so that $(\bar f\circ f)(t)=(f\circ \bar f)(t)=t$. Then the composition of $f$ and $h$ in the case of $h(0)=a$ 
still possess a formal series expansion in $t$, namely, 

\begin{align}\label{p-3}
(f\circ h)(t)=&\sum^\infty_{n=0}\left([t^n](f\circ h)(t) \right)t^n=f\left( a+\sum^\infty_{n=1}\alpha_n t^n\right)\nonumber\\
=&f(a)+\sum^\infty_{n=1}\left( [t^n] (f\circ h)(t)\right)t^n.
\end{align}

Let $f^{(k)}(a)$ denote the $k$th derivative of $f(t)$ at $t=a$, i.e.,  

\[
f^{(k)}(a)=(d^k/dt^k)f(t)|_{t=a}.
\]
Recall the Fa\`a di Brumo's formula when applied to $(f\circ h)(t)$ may be written in the form (cf. Section 3.4 of \cite{Com74}) 

\be\label{p-4}
[t^n](f\circ \phi)=\sum_{\sigma(n)}f^{(k)}(\phi(0))\Pi^n_{j=1}\frac{1}{k_j!}\left([t^i]\phi\right)^{k_j},
\ee
where the summation ranges over the set $\sigma (n)$ of all partitions of $n$, that is, over the set of all nonnegative integral solutions $(k_1,k_2,\ldots,k_n)$ of the equations $k_1+2k_2+\cdots+ nk_n=n$ and $k_1+k_2+\cdots+k_n=k$, $k=1,2,\ldots, n$. Each solution $(k_1,k_2,\ldots, k_n)$ of the equations is called a partition of $n$ with $k$ parts and is denoted by $\sigma(n,k)$. Of course, the set $\sigma(n)$ is the union of all subsets $\sigma(n,k)$, $k=1,2,\ldots, n$.

Let $\beta_n=[t^n](f\circ h)(t)$ and $h(0)=\alpha_0=a$. Then there exists a pair of reciprocal relations

\begin{align}
&\beta_n=\sum_{\sigma(n)}f^{(k)}(a)\frac{\alpha^{k_1}_1\cdots \alpha^{k_n}_n}{k_1!\cdots k_n!},\label{Ex:7-3-1}\\
&\alpha_n=\sum_{\sigma(n)}\bar f^{(k)}(f(a))\frac{\beta^{k_1}_1\cdots \beta^{k_n}_n}{k_1!\cdots k_n!},\label{Ex:7-3-2}
\end{align}
where the summation ranges the set $\sigma (n)$ of all partitions of $n$. In fact, from \eqref{p-3} the given conditions ensure that there holds a pair of formal series expansions

\begin{align}
&f\left( a+\sum_{n\geq 1}\alpha_nt^n\right)=f(a)+\sum_{n\geq 1}\beta_nt^n,\label{Ex:7-3-5}\\
&\bar f\left( f(a)+\sum_{n\geq 1}\beta_nt^n\right)=a+\sum_{n\geq 1}\alpha_nt^n.\label{Ex:7-3-6}
\end{align}
Thus, an application of the Fa\`a di Bruno formula \eqref{p-4} to $(f\circ \phi)(t)$, on the LHS of \eqref{Ex:7-3-5} yields the expression \eqref{Ex:7-3-1} with $[t^i]\phi=\alpha_i$, 
$[t^n](f\circ \phi)=\beta_n$, and $\phi(0)=a$. Note that the LHS of \eqref{Ex:7-3-6} may be expressed as $\phi(t)=((\bar f\circ f)\circ \phi)(t)=(\bar f\circ (f\circ \phi))(t)$, so that in a like manner and application of the Fa\`a di Bruno formula to the LHS of \eqref{Ex:7-3-6} gives precisely the equality \eqref{Ex:7-3-2}.

Replacing $\alpha_n$ by $x_n/n!$ and $\beta_n$ by $y_n/n!$, we see that \eqref{Ex:7-3-1} and \eqref{Ex:7-3-2} may be expressed in terms of the exponential Bell polynomials, namely,

\begin{align}
&y_n=\sum^n_{k=1}f^{(k)}(a)B_{n,k}(x_1,x_2,\ldots,x_{n-k+1}),\label{Ex:7-3-3}\\
&x_n=\sum^n_{k=1}\bar f^{(k)}(a)B_{n,k}(y_1,y_2,\ldots,y_{n-k+1}),\label{Ex:7-3-4}
\end{align}
where $B_{n,k}(\ldots)$ is defined by (cf. Section 3.3 of \cite{Com74}) 

\[
B_{n,k}(x_1,x_2,\ldots,x_{n-k+1})=\sum_{\sigma(n,k)}\frac{n!}{k_1!k_2!\cdots}\left( \frac{x_1}{1!}\right)^{k_1}\left( \frac{x_2}{2!}\right)^{k_2}\cdots
\]
and $\sigma(n,k)$ as shown above is the set of the solutions of the partition equations for a given $k$ ($1\leq k\leq n$). $B_{n,k}=B_{n,k}(f_1,f_2,\ldots)$ is the Bell polynomial with respect to $(n!)_{n\in\mathbb{N}}$, defined as follows:

\begin{equation}\label{Bellptoom}
\frac{1}{k!}(f(z))^k=\sum_{n=k}^\infty B_{n,k}\frac{z^n}{n!}\,.
\end{equation}
Therefore, $B_{n,k}=[z^n/n!](f(z))^k/k!$, which implies that the iteration matrix $B(f(z))$ is the Riordan array $(1, f(z))$. Now, the following important property of the iteration matrix
(see Theorem A on p. 145 of Comtet \cite{Com74}, Roman \cite{Rom}, and Roman and Rota \cite{RomRot78} )

\[
B(f(g(z)))=B(g(z))B(f(z))
\]
is trivial in the context of the theory of Riordan arrays, i.e.,

\[
(1,f(g(z)))=(1,g(z))(1,f(z))\,;
\]
and the Fa\`{a} di Bruno formula derived from the above property is an application of the FTRA.

Let $f(x)=x^p$ ($p\not= 0$). Then $\bar f(x)=x^{1/p}$ with $f^{(k)}(1)=(p)_k$ and $\bar f^{(k)}(1)=(1/p)_k$. Hence, we obtain the special cases of \eqref{Ex:7-3-1} and \eqref{Ex:7-3-2}:

\begin{align}
&\beta_n=\sum_{\sigma(n)}(\alpha)_k\frac{\alpha_1^{k_1}\cdots \alpha_n^{k_n}}{k_1!\cdots k_n!},\label{Ex:7-3-1-2}\\
&\alpha_n=\sum_{\sigma(n)}(1/\alpha)_k\frac{\beta_1^{k_1}\cdots \beta_n^{k_n}}{k_1!\cdots k_n!},
\label{Ex:7-3-2-2}
\end{align}
where $(p)_k=p (p-1)\ldots (p-k+1)$ and $(\alpha)_0=1$. The above Fa\`a di Bruno's relations have the associated expressions 

\begin{align}
&\left( 1+\sum^\infty_{n=1}\alpha_nt^n\right)^p=1+\sum^\infty_{n=1}\beta_nt^n,\label{Ex:7-3-1-3}\\
&\left( 1+\sum^\infty_{n=1}\beta_nt^n\right)^{1/p}=1+\sum^\infty_{n=1}\alpha_nt^n.\label{Ex:7-3-2-3}
\end{align}

As an example, if $h=a_0+a_1t$ and $f(t)=t^p$, then $f(h(t))=a_0^p(1+\alpha_1 t)^p$, where 
$\alpha_1=a_1/a_0$. From \eqref{Ex:7-3-1-3} we have 

\[
(a_0+a_1t)^p=a_0^p\left(1+\alpha_1t\right)^p=a_0^p\left( 1+ \sum^\infty_{j=1}\beta_j t^j\right),
\]
where 

\[
\beta_j=\sum_{\sigma(j)}(\alpha)_k\frac{\alpha_1^{k_1}\cdots \alpha_n^{k_n}}{k_1!\cdots k_n!}=(p)_j\frac{\alpha_1^j}{j!}=\binom{p}{j}\alpha_1^j,
\]
which presents the obvious expression $(a_0+a_1 t)^p=a_0^p+\sum^p_{j=1}\binom{p}{j}a_0^{p-j}a_1^j t^j$.

Similarly, if $h=a_0+a_1t+a_2t^2$, $a_0\not=0$, then 

\[
(a_0+a_1t+a_2t^2)^p=a_0^p\left(1+\frac{a_1}{a_0}t+\frac{a_2}{a_0}t^2\right)^p=
a_0^p\left( 1+\sum^p_{j=1}\beta_jt^j\right),
\]
where 

\[
\beta_j=\sum_{\sigma(j)}(p)_j\frac{1}{j_i!j_2!}\left(\frac{a_1}{a_0}\right)^{j_1}\left(\frac{a_2}{a_0}\right)^{j_2}=\sum^j_{j_i=0}\binom{p}{j}\binom{j}{j_1}\left(\frac{a_1}{a_0}\right)^{j_1}\left(\frac{a_2}{a_0}\right)^{j-j_1}.
\]

\begin{theorem}\label{thm:4.3.6-4}
Let $A(t)=\sum_{n\geq 0} a_nt^n$ $(a_0\not= 0)$ be the generating function of the $A$-sequence of the given Riordan array $(d_{n,k})_{n,k\geq 0}=(g,f)$, and let $(\tilde d_{n,k}=d_{pn+r,(p-1)n+r+k})_{n,k\geq 0}$ be the $(p,r)$ Riordan array of $(g,f)$. Then there exists the following summation formula:

\be\label{p-5}
d_{p(n+1)+r,(p-1)(n+1)+r+k+1}=\sum^{n-k}_{j= 0}\beta_jd_{pn+r,(p-1)n+r+k+j},
\ee
where by denoting $(p)_j=p(p-1)\ldots (p-j+1)$, $\beta_0=a_0^p$, and for $n\geq 1$ and 
$\alpha_i=a_i/a_0$, 

\begin{align}\label{p-6}
\beta_j=&a_0^p[t^j](A(t))^p=\sum_{\sigma(j)}(p)_j\frac{\alpha_1^{k_1}\cdots \alpha_j^{k_j}}{k_1!\cdots k_j!}\nonumber\\
=&\sum^j_{i=1}\sum_{\sigma(j,i)}\binom{p}{j}\frac{j!}{k_1!k_2!\ldots}(\alpha_1)^{k_1}(\alpha_2)^{k_2}\ldots.
\end{align}
Particularly, for $A(t)=a_0+a_1t$ and $A(t)=a_0+a_1t+a_2t^2$, we have 

\begin{align*}
&\beta_j=\binom{p}{j}a_0^{p-j}a_1^j\quad \mbox{and}\\
&\beta_j=\sum^j_{i=0}\binom{p}{j}\binom{j}{i}a_0^{p-j}a_1^{j-i}a_2^{i},
\end{align*}
respectively.
\end{theorem}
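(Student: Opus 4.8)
The plan is to read the identity directly off the $A$-sequence characterization of the horizontal $(p,r)$ Riordan array supplied by Theorem \ref{thm:4.3.6-3}. That theorem guarantees that $(\tilde d_{n,k})_{n,k\geq 0}$ is a genuine Riordan array whose $A$-sequence has generating function $(A(t))^p$. Setting $\beta_j=[t^j](A(t))^p$, so that $\sum_{j\geq 0}\beta_j t^j=(A(t))^p$, the general $A$-sequence recurrence (displayed in the Introduction) applied to $(\tilde d_{n,k})$ reads $\tilde d_{n+1,m}=\sum_{j\geq 0}\beta_j\,\tilde d_{n,m-1+j}$ for every $m$. This is the only structural input I need.

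First I would specialize this recurrence to $m=k+1$, obtaining $\tilde d_{n+1,k+1}=\sum_{j\geq 0}\beta_j\,\tilde d_{n,k+j}$. Next I would substitute the index translation $\tilde d_{n,k}=d_{pn+r,(p-1)n+r+k}$ from Theorem \ref{thm:4.3.6-3}: the left-hand side becomes $d_{p(n+1)+r,(p-1)(n+1)+r+k+1}$, while the generic summand becomes $\beta_j\,d_{pn+r,(p-1)n+r+k+j}$, which is precisely \eqref{p-5}. The finite upper limit $j=n-k$ is then automatic from lower-triangularity of the Riordan array $(\tilde d_{n,k})$: the entry $\tilde d_{n,k+j}$ vanishes once $k+j>n$, so all summands with $j>n-k$ are zero and can be discarded.

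It remains to match $\beta_j=[t^j](A(t))^p$ with the explicit forms in \eqref{p-6}. Writing $A(t)=a_0\bigl(1+\sum_{i\geq 1}\alpha_i t^i\bigr)$ with $\alpha_i=a_i/a_0$, so that $(A(t))^p=a_0^p\bigl(1+\sum_{i\geq 1}\alpha_i t^i\bigr)^p$, I would extract $[t^j]$ using the power expansion \eqref{Ex:7-3-1-3} together with its Fa\`a di Bruno form \eqref{Ex:7-3-1-2}. This yields $\beta_0=a_0^p$ and, for $j\geq 1$, the partition sum over $\sigma(j)$ recorded in \eqref{p-6}. The two special cases then follow by inspection: for $A(t)=a_0+a_1t$ the ordinary binomial theorem gives $(A(t))^p=\sum_j\binom{p}{j}a_0^{p-j}a_1^j t^j$, hence $\beta_j=\binom{p}{j}a_0^{p-j}a_1^j$; for $A(t)=a_0+a_1t+a_2t^2$ grouping the trinomial expansion by the total power of $t$ produces the stated double sum.

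No serious obstacle arises, since Theorem \ref{thm:4.3.6-3} carries all of the analytic weight; the only delicate point is the index bookkeeping — verifying that the shift to column $m=k+1$ is exactly what turns the abstract $A$-recurrence into \eqref{p-5}, and that $\tilde d_{n+1,k+1}$ and $\tilde d_{n,k+j}$ correspond to the $d$-entries written in the statement. As a consistency check I would confirm the normalization through $\beta_0=a_0^p=[t^0](A(t))^p$ and through the linear special case, ensuring the factor $a_0^p$ is placed correctly in \eqref{p-6}.
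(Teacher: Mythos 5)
Your proposal is correct and follows essentially the same route as the paper: the paper also deduces \eqref{p-5} directly from the $A$-sequence recurrence applied to $(\tilde d_{n,k})$ using Theorem \ref{thm:4.3.6-3}, and identifies the $\beta_j$ via \eqref{Ex:7-3-1-2}; you merely spell out the column shift $m=k+1$ and the truncation at $j=n-k$ that the paper leaves implicit. Your reading $\beta_j=[t^j](A(t))^p=a_0^p[t^j]\bigl(1+\sum_{i\geq 1}\alpha_i t^i\bigr)^p$ is the consistent normalization, matching $\beta_0=a_0^p$ and the linear special case.
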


\begin{proof}
Since $(f(t))^p$ is the generating function of the $A$-sequence of $(\tilde d_{n,k})$ and $\tilde d_{n,k}=d_{pn+r,(p-1)n+r+k}$, we obtain \eqref{p-5} from the definition of $A$-sequence, where $\beta_j$ can be found from \eqref{p-3} and \eqref{Ex:7-3-1-2}. 
\end{proof}

Using \eqref{p-5} in Theorem \ref{thm:4.3.6-4}, one may obtain many identities. 

\begin{example}\label{ex:4.3.3}
Consider Pascal matrix $(1/(1-t), t/(1-t))$, its $A$-sequence generating function is $A(t)=1+t$. Applying \eqref{p-5}, we have 

\be\label{p-7}
\binom{p(n+1)+r}{(p-1)(n+1)+r+k+1}=\sum^{\min\{p,n-k\}}_{j=0}\binom{p}{j}\binom{pn+r}{(p-1)n+r+k+j}.
\ee 
If $p=1$ and $r=0$, the above identity reduces to the well-known identity $\binom{n+1}{k+1}=\binom{n}{k}+\binom{n}{k+1}$. 

The Riordan array $(1/(1-t-t^2), tC(t))$ is considered, where $C(t)=\sum^\infty_{n=0} \binom{2n}{n}t^n/(n+1)=(1-\sqrt{1-4t})/(2t)$ is the Catalan function. It can be found that the $A$-sequence of the Riordan array $(1/(1-t-t^2), tC(t))$ is $(1,1,1,\ldots)$, i.e., the $A$-sequence has the generating function $A(t)=1/(1-t)$. From \cite{GKP, HS17} we have 

\be\label{4.3.4}
C(t)^{k}=\sum_{n=0}^{\infty }\frac{k}{2n+k}\binom{2n+k}{n}t^{n}.  
\ee
Thus, the $(n,k)$ entry of the Riordan array $(1/(1-t-t^2), tC(t))$ is 

\begin{align*}
d_{n,k}=&[t^n] \frac{1}{1-t-t^2}(tC(t))^k\\
=&[t^{n-k}]\left(\sum_{i\geq 0}F_i t^i\right) 
\left( \sum_{j\geq 0}\frac{k}{2j+k}\binom{2j+k}{j}t^j\right)\\
=&[t^{n-k}]\sum_{i\geq 0} \left( \sum^i_{j=0}F_{i-j}\frac{k}{2j+k}\binom{2j+k}{j}\right)t^i\\
=&\sum^{n-k}_{j=0}F_{n-k-j}\frac{k}{2j+k}\binom{2j+k}{j}.\\
\end{align*}
Since 

\[
(A(t))^p=(1-t)^{-p}=\sum_{i\geq 0} \binom{-p}{i}(-t)^i=\sum_{i\geq 0} \binom{p+i-1}{i}t^i,
\]
From \eqref{p-5} there holds an identity 

\begin{align*}
&\sum^{n-k}_{j=0}F_{n-k-j}\frac{(p-1)(n+1)+r+k+1}{2j+(p-1)(n+1)+r+k+1}\binom{2j+(p-1)(n+1)+r+k+1}{j}\\
=&\sum_{i\geq0}\binom{p+i-1}{i}\sum^{n-k-i}_{j=0}F_{n-k-i-j}\frac{(p-1)n+r+k+i}{2j+(p-1)n+r+k+i}\binom{2j+(p-1)n+r+k+i}{j}.
\end{align*}

Similarly, for the Riordan array $(C(t), tC(t))$, its $(n,k)$ entry is 

\begin{align*}
d_{n,k}=&[t^n] t^k(C(t))^{k+1}\\
=&[t^{n-k}]\sum_{j\geq 0}\frac{k+1}{2j+k+1}\binom{2j+k+1}{j}t^j\\
=&\frac{k+1}{2n-k+1}\binom{2n-k+1}{n-k}.
\end{align*}
Hence, from \eqref{p-5} we may derive the identity 

\begin{align*}
&\frac{(p-1)(n+1)+r+k+2}{(p+1)(n+1)+r-k}\binom{(p+1)(n+1)+r-k}{n-k}\\
=&\sum^{n-k}_{j=0}
\frac{(p-1)n+r+k+j+1}{(p+1)n+r-k-j+1}\binom{p+j-1}{j}\binom{(p+1)n+r-k-j+1}{n-k-j}.
\end{align*}
\end{example}

\section{More identities}
The generating function $F_{m}(t)$ of the $m$th order Fuss-Catalan numbers $(F_{m}(n,$ $1))_{n\geq 0}$ is called the generalized binomial series in \cite{GKP}, and it satisfies the function equation $F_{m}(t)=1+tF_{m}(t)^{m}$. Hence from Lambert's formula for the Taylor expansion of the powers of $F_{m}(t)$ (cf. P. 201 of \cite{GKP}), we have 
\begin{equation}
F_{m}^{r}:=F_{m}(t)^{r}=\sum_{n\geq 0}\frac{r}{mn+r}\binom{mn+r}{n}t^{n}
\label{4.3.5}
\end{equation}
for all $r\in {{\mathbb{R}}}$, where $F_m(t)$ is defined by 
\be\label{4.3.5-2}
F_m(t)=\sum_{k\geq 0}\frac{(mk)!}{(m-1)k+1)!}\frac{t^k}{k!}=\sum_{k\geq 0} \frac{1}{(m-1)k+1}\binom{mk}{k}t^k.
\ee
For instance, 
\begin{align*}
&F_{0}(t)=1+t,\\
&F_1(t)=\sum_{k\geq 0} t^k=\frac{1}{1-t},\\
&F_2(t)=\sum_{k\geq 0} \frac{1}{k+1}\binom{2k}{k}t^k=C(t).
\end{align*}
The key case \eqref{4.3.5} leads the
following formula for $F_{m}(t)$:
\begin{equation}
F_{m}(t)=1+tF_{m}^{m}(t).  \label{4.3.6}
\end{equation}
Actually,
\begin{eqnarray*}
1+tF_{m}^{m}(t) &=&1+\sum_{n\geq 0}\frac{m}{mn+m}\binom{mn+m}{n}t^{n+1} \\
&=&1+\sum_{n\geq 1}\frac{m}{mn}\binom{mn}{n-1}t^{n} \\
&=&\sum_{n\geq 0}\frac{1}{mn+1}\binom{mn+1}{n}t^{n}=F_{m}(t).
\end{eqnarray*}
For the cases $m=1$ and $2$, we have $F_{1}=1/(1-t)$ and $F_{2}=C(t)$,
respectively. When $m=3$, the Fuss-Catalan numbers $\left( F_{3}\right) _{n}$
form the sequence $A001764$ (cf.  \cite{OEIS}), $1,1,3,12,55,273,1428,\ldots $, which are the {\it ternary numbers}\index{ternary numbers}. The ternary numbers count the number of $3$-Dyck paths or ternary paths. The generating function of the ternary numbers is denoted as $T(t)=\sum_{n=0}^{\infty }T_{n}t^{n}$ with $T_{n}=\frac{1}{3n+1}\binom{3n+1}{n}$, and is given equivalently by the equation $T(t)=1+tT(t)^3$. 

We now give more examples of Theorem \ref{thm:4.3.6-2} related to Fuss-Catalan numbers. First, we establish the relation between the Fuss-Catalan numbers and the Riordan array $(\tilde g, \tilde f)=(\tilde d_{n,k})_{n,k\geq 0}$, where $\tilde d_{n,k}=d_{pn+r,(p-1)n+r+k}$ and $d_{n,k}$ is the $(n,k)$ entry of the Pascal' triangle $(g,f)=(1/(1-t), t/(1-t))$. 

\begin{theorem}\label{thm:4.3.15}
Let $(d_{n,k})_{n,k\geq 0}=(1/(1-t), t/(1-t))$ be the Pascal triangle, for any integers $p\geq 2$ and $r\geq 0$ and a given Riordan array $(g,f)$ let $(\tilde d_{n,k}=d_{pn+r,(p-1)n+r+k})_{n,k\geq 0}=(\tilde g, \tilde f)$ be the one-$p$th or $(p,r)$ Riordan array of $(g,f)$.  Then 

\begin{align}
& \tilde g(t)=\sum_{n\geq 0}\binom{pn+r}{n}t^n=\left.\frac{(1+w)^{r+1}}{1-(p-1)w}\right|_{w=t(1+w)^p}\label{4.3.48}\\
&\tilde f(t)= \sum^\infty_{n=1}\frac{1}{pn+1}\binom{pn+1}{n}t^n=F_p(t)-1=tF_p^p(t),\label{4.3.49}
\end{align}
where $F_p(t)$ is the $p$th order Fuss-Catalan function satisfying 

\be\label{4.3.50}
F_p\left( t(1-t)^{p-1}\right)=\frac{1}{1-t}.
\ee
\end{theorem}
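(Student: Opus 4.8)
The plan is to specialize Theorem~\ref{thm:4.3.6-3} to the Pascal array $(g,f)=(1/(1-t),\,t/(1-t))$ and to simplify the two resulting closed forms, organizing everything around the auxiliary series $\phi$. Since $f=t/(1-t)$ gives $t^{p}/f^{p-1}=t(1-t)^{p-1}$, the function $\phi=\overline{t^{p}/f^{p-1}}$ is characterized implicitly by
\[
\phi(1-\phi)^{p-1}=t .
\]
I would carry this relation, together with $g(\phi)=1/(1-\phi)$ and $f(\phi)=\phi/(1-\phi)$, through all the computations.

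First I would treat $\tilde f$ and the functional equation \eqref{4.3.50}. Theorem~\ref{thm:4.3.6-3} gives $\tilde f=f(\phi)=\phi/(1-\phi)$. Introducing $w:=\phi/(1-\phi)$, so that $1-\phi=1/(1+w)$ and $\phi=w/(1+w)$, and substituting into $\phi(1-\phi)^{p-1}=t$ yields $w=t(1+w)^{p}$. This is exactly the equation satisfied by $F_p-1$: from \eqref{4.3.6}, $F_p=1+tF_p^{p}$, so $G:=F_p-1$ obeys $G=t(1+G)^{p}$, whence $G=w$ by uniqueness of the power-series solution. Therefore $\tilde f=w=F_p-1=tF_p^{p}$, which is \eqref{4.3.49}; the two coefficient expressions agree because $\tfrac{1}{pn+1}\binom{pn+1}{n}=\tfrac{1}{(p-1)n+1}\binom{pn}{n}$, matching \eqref{4.3.5-2}. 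Moreover $\tilde f+1=1/(1-\phi)=F_p(t)$, and since $\phi$ inverts $t\mapsto t(1-t)^{p-1}$ we have $\phi\!\left(t(1-t)^{p-1}\right)=t$; evaluating $F_p=1/(1-\phi)$ at the argument $t(1-t)^{p-1}$ then gives $F_p\!\left(t(1-t)^{p-1}\right)=1/(1-t)$, which is \eqref{4.3.50}.

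Next I would compute $\tilde g=t\phi'g(\phi)f(\phi)^{r}/\phi^{r+1}$. Differentiating $\phi(1-\phi)^{p-1}=t$ implicitly gives $\phi'=1/[(1-\phi)^{p-2}(1-p\phi)]$. Inserting this, $g(\phi)=1/(1-\phi)$, $f(\phi)^{r}=\phi^{r}/(1-\phi)^{r}$, and $t=\phi(1-\phi)^{p-1}$ makes the powers of $\phi$ cancel and the powers of $(1-\phi)$ collapse, leaving the compact form $\tilde g=1/[(1-\phi)^{r}(1-p\phi)]$. Passing to the variable $w$ (with $1-p\phi=(1-(p-1)w)/(1+w)$) converts this to $(1+w)^{r+1}/(1-(p-1)w)$ subject to $w=t(1+w)^{p}$, which is the right-hand member of \eqref{4.3.48}.

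Finally I would identify the series $\sum_{n\geq0}\binom{pn+r}{n}t^{n}$. The fastest route is that $\tilde g$ is the zeroth-column generating function, and by Theorem~\ref{thm:4.3.6-3} its entries are $\tilde d_{n,0}=d_{pn+r,(p-1)n+r}=\binom{pn+r}{n}$. As an independent check I would apply Lagrange inversion to $w=t(1+w)^{p}$ with $\psi(w)=(1+w)^{p}$: here $1-t\psi'(w)=(1-(p-1)w)/(1+w)$, so
\[
[t^{n}]\frac{(1+w)^{r+1}}{1-(p-1)w}=[t^{n}]\frac{(1+w)^{r}}{1-t\psi'(w)}=[w^{n}](1+w)^{r}\psi(w)^{n}=[w^{n}](1+w)^{pn+r}=\binom{pn+r}{n}.
\]
The main obstacle is only the bookkeeping in the $\tilde g$ computation---correctly forming $\phi'$ and matching the exponents of $(1-\phi)$ so that the net power is $-r$---rather than any conceptual step; once $\phi$ and the substitution $w=\phi/(1-\phi)$ are in hand, the identification of $w$ with $F_p-1$ and the functional equation follow at once.
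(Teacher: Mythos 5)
Your proof is correct, and it takes a genuinely different route from the paper's for the main computations. The paper also specializes Theorem \ref{thm:4.3.6-3} to the Pascal array, but it proves \eqref{4.3.48} by writing $\binom{pn+r}{n}=[t^n](1+t)^r\left((1+t)^p\right)^n$ and invoking a diagonal form of Lagrange inversion, proves \eqref{4.3.49} from the coefficient formula $[t^n]\tilde f=\frac{1}{n}[t^{n-1}](A(t))^{pn}$ with $A(t)=1+t$, and then deduces \eqref{4.3.50} by substituting $t\mapsto\bar\phi(t)=t(1-t)^{p-1}$ into $f(\phi)=tF_p^p$ (which implicitly requires extracting a $p$th root of $F_p^p(t(1-t)^{p-1})=(1-t)^{-p}$). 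You instead organize everything around the single implicit relation $\phi(1-\phi)^{p-1}=t$ and the change of variable $w=\phi/(1-\phi)$: identifying $w$ with $F_p-1$ via the functional equation $w=t(1+w)^p$ gives \eqref{4.3.49} with no Lagrange inversion; the direct simplification of $t\phi'g(\phi)f(\phi)^r/\phi^{r+1}$ to $(1-\phi)^{-r}(1-p\phi)^{-1}$ gives the closed form in \eqref{4.3.48}, with the series identification coming for free from $\tilde d_{n,0}=\binom{pn+r}{n}$; and composing $F_p=1/(1-\phi)$ with $\bar\phi$ yields \eqref{4.3.50} cleanly, avoiding the root extraction. Your intermediate computations check out: $\phi'=1/\bigl[(1-\phi)^{p-2}(1-p\phi)\bigr]$, $1-p\phi=(1-(p-1)w)/(1+w)$, and $\frac{1}{pn+1}\binom{pn+1}{n}=\frac{1}{(p-1)n+1}\binom{pn}{n}$ are all correct. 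What your approach buys is a uniform, functional-equation treatment of all three claims from one relation; what the paper's buys is explicit coefficient-level derivations tied to the $A$-sequence machinery developed in Section 2.
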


\begin{proof} 
For expression \eqref{4.3.48}, we find 

\begin{align*}
[t^n]\tilde g=& \tilde d_{n,0}=d_{pn+r,(p-1)n+r}=\binom{pn+r}{n}\\
=&[t^n](1+t)^{pn+r}=[t^n](1+t)^r((1+t)^p)^n\\
=&\left.[t^n]\frac{(1+w)^r}{1-t(d/dw)((1+w)^p)}\right|_{w=t(1+w)^p},
\end{align*}
which implies \eqref{4.3.48}. 

From \eqref{p-1-2} of Theorem \ref{thm:4.3.6-3} we know that 

\be\label{4.3.51}
(\tilde g, \tilde f)=\left( \frac{t\phi'(t)g(\phi)f(\phi)^r}{\phi^{r+1}}, f(\phi)\right),
\ee
where $\phi(t)=\overline{\frac{t^{p}}{(f(t))^{p-1}}}$, and $\bar h(t)$ is the compositional inverse of $h(t)$ $(h(0)=0$ and $h'(0)\not=0)$. Moreover, the generating function of the A-sequence of the new array $(\tilde g, \tilde f)$ is $(A(t))^p$ , where $A(t)$ is the generating function of the A-sequence of the given Riordan array $(g,f)$. By using the Lagrange Inverse Formula 
\[
[t^{n}](f(t))^k=\frac{k}{n}[t^{n-k}](A(t))^n,
\]
we have 

\[
[t^n]\tilde f=\frac{1}{n}[t^{n-1}](A(t))^{pn}=\frac{1}{n}[t^{n-1}](1+t)^{pn}=\frac{1}{n}\binom{pn}{n-1}.
\]
Therefore, 

\[
\tilde f=\sum^\infty_{n=1}\frac{(pn)!}{((p-1)n+1)!n!}t^n=\sum^\infty_{n=1} \frac{1}{pn+1}\binom{pn+1}{n}t^n=F_p(t)-1.
\]
Since the key equation \eqref{4.3.6} of the Fuss-Catalan function $F_p$ shows $F_p=1+tF^p_p$, we obtain \eqref{4.3.49}. From \eqref{4.3.51}, 

\[
f(\phi)=\tilde f(t)=tF^p_p(t).
\]
Therefore, noting $f(t)=t/(1-t)$ we get 

\[
\frac{t}{1-t}=f(t)=\overline{\phi}F^p_p\left( \overline{\phi}\right)=\frac{t^p}{(f(t))^{p-1}}F^p_p\left( \frac{t^p}{(f(t))^{p-1}}\right)=t(1-t)^{p-1}F^p_p\left( t(1-t)^{p-1}\right),
\]
and \eqref{4.3.50} follows from the comparison of the leftmost side and the rightmost side of the above equation. 
\end{proof}

For example, if $p=2$ and $r\geq 0$, then 

\[
\tilde f=tF^2_2(t)=t(C(t))^2.
\]
Since $w=t(1+w)^2$ has a solution 

\[
w=\frac{1-2t-\sqrt{1-4t}}{2t}=C(t)-1,
\]
we have 

\[
\tilde g=\left.\frac{(1+w)^{r+1}}{1-w}\right|_{w=t(1+w)^2}=\frac{(C(t))^{r+1}}{2-C(t)}=\frac{(C(t))^r}{\sqrt{1-4t}}=B(t) (C(t))^r,
\]
where $B(t)$ is the generating function for the central binomial coefficients. Thus, $(\tilde d_{n,k})_{n,k\geq 0}=(d_{2n+r, n+r+k})_{n,k\geq 0}$ is the Riordan array 

\[
(\tilde g, \tilde f)=\left( B(t)C^r, t(C(t))^2\right).
\]

We need one more property of Riordan arrays, which generalizes a well-known property of the Pascal triangle and is shown in Brietzke \cite{Bri}.

\begin{theorem}\label{thm:4.3.14}
Let $(d_{n,k})_{n,k\geq 0}=(g,f)$ be a Riordan array. Then for any integers $k\geq s\geq 1$ we have 
\be\label{4.3.46}
d_{n,k}=\sum^n_{j=s}d_{n-j,k-s}[t^j](f(t))^s.
\ee

Particularly, for $s=1$, $d_{n,k}=\sum^n_{j=1}f_{j}d_{n-j,k-1}$, where $f_j=[t^j]f(t)$. 
\end{theorem}
\begin{proof}
The $(n,k)$ entry of the Riordan array $(g,f)$ can be written as 
\begin{align*}
d_{n,k}=& [t^n]g(t)(f(t))^k=[t^n]g(t)(f(t))^{k-s}((f(t))^s\\
=&\sum^n_{j=s}\left([t^{n-j}]g(t)(f(t))^{k-s}\right)\left([t^j] (f(t))^s\right)\\
=&\sum^n_{j=s}d_{n-j,k-s}[t^j]((f(t))^s.
\end{align*}
\end{proof}
\begin{example}\label{ex:4.3.6}
If $(g,f)=(1/(1-t), t/(1-t))$, then $f_j=[t^j](t/(1-t))=1$ for all $j\geq 1$. We have the well-known identity 
\be\label{4.3.47}
\sum^n_{j=1}\binom{n-j}{k-1}=\binom{n}{k}.
\ee
More generally, for the Pascal triangle $(g,f)=(1/(1-t), t/(1-t))$, we have 
\[
[t^j](f(t))^s=[t^j]\frac{t^s}{(1-t)^s}=[t^{j-s}](1-t)^{-s}=[t^{j-s}]\sum_{i\geq 0}\binom{s+i-1}{i}t^i
=\binom{j-1}{s-1}.
\]
Consequently, \eqref{4.3.46} becomes the Chu-Vandermonde identity
\[
\sum^n_{j=s}\binom{n-j}{k-s}\binom{j-1}{s-1}=\binom{n}{k},
\]
which contains \eqref{4.3.47} as a special case. 
\end{example}

\begin{example}\label{ex:4.3.7}
For fixed integers $p\geq 2$ and $r\geq 0$, starting with the Pascal triangle and using Theorem \ref{thm:4.3.6-2}, we obtain the Riordan array $(\tilde g, \tilde f)$ with its $(n,k)$ entry as 

\[
\tilde d_{n,k}=\binom{pn+r}{(p-1)n+r+k}=\binom{pn+r}{n-k}
\]
possesses the formal power series $\tilde f(t)=tF^p_p(t)$. Thus,

\begin{align*}
[t^j](\tilde f(t))^s=&[t^{j-s}]F_p^{ps}(t)=[t^{j-s}]\frac{ps}{pn+ps}\binom{pn+ps}{n}\\
=&\frac{ps}{p(j-s)+ps}\binom{p(j-s)+ps}{j-s}=\frac{s}{j}\binom{pj}{j-s}.
\end{align*}
From the expression \eqref{4.3.46} of Theorem \ref{thm:4.3.14} we obtain the identity

\be\label{4.3.52}
\sum^n_{j=s}\frac{s}{j}\binom{pj}{j-s}\binom{p(n-j)+r}{n-j-k+s}=\binom{pn+r}{n-k}. 
\ee
Particularly, if $s=1$, then \eqref{4.3.52} becomes 

\[
\sum^n_{j=1}\frac{1}{pj+1}\binom{pj+1}{j}\binom{p(n-j)+r}{n-j-k+1}=\binom{pn+r}{n-k}
\]
and, finally, adding to the both sides $\binom{pn+r}{n-k+1}$, we have 

\[
\sum^n_{j=0}\frac{1}{pj+1}\binom{pj+1}{j}\binom{p(n-j)+r}{n-j-k+1}=\binom{pn+r+1}{n-k+1}.
\]

Setting $j=i+s$, $x=ps$, $y=pk-ps+r$, and replacing $n$ by $n+k$, identity \eqref{4.3.52} becomes formula (5.62) of \cite{GKP}:

\[
\sum^n_{i=0}\frac{x}{x+pi}\binom{x+pi}{i}\binom{y+p(n-i)}{n-i}=\binom{x+y+pn}{n}.
\]
Substituting $p=-q$, $x=r$, and $y+pn=p$, the above identity is equivalent to the {\it Gould identity}:
\[
\sum^n_{i=0}\frac{r}{r-qi}\binom{r-qi}{i}\binom{p+qi}{n-i}=\binom{r+p}{n}.
\]
\end{example}

\noindent{\bf Acknowledgements} 
\medbreak
The author wish to express his gratitude and appreciation to the referee and the editor for their helpful comments and remarks. 

\bigbreak

\end{document}